\g@addto@macro\normalsize{%
  \setlength\abovedisplayskip{8pt plus 3pt minus 3pt}
  \setlength\belowdisplayskip{8pt plus 3pt minus 3pt}
  \setlength\abovedisplayshortskip{6pt plus 3pt minus 2pt}
  \setlength\belowdisplayshortskip{6pt plus 3pt minus 2pt}
}
\date{}
\numberwithin{equation}{section}
\newcommand\calL{\mathcal{L}}
\renewcommand\dfrac[2]{\lower0.15ex\hbox{\large$\frac{#1}{#2}$}}
\newcommand\nicebreak{\vskip 0pt plus 50pt\penalty-300\vskip 0pt plus -50pt }
\newcommand\eps{\varepsilon}
\newtheorem{thm}{Theorem}[section]
\newtheorem{cor}[thm]{Corollary}
\newtheorem{lemma}[thm]{Lemma}
\newtheorem{conj}[thm]{Conjecture}
\def\dfrac#1#2{\lower0.15ex\hbox{\large$\textstyle\frac{#1}{#2}$}}
\def\({\bigl(}
\def\){\bigr)}
\def\({\bigl(}
\def\){\bigr)}
\let\eps=\varepsilon
\begin{document}

\title{Factorisation of the complete graph into spanning regular factors}

\author{
Mahdieh Hasheminezhad\\
\small Department of Mathematical Sciences\\[-0.8ex]
\small Yazd University\\[-0.8ex]
\small Yazd, Iran\\
\small\tt hasheminezhad@yazd.ac.ir
\and
Brendan D. McKay\thanks{Supported by the Australian Research Council}\\
\small School of Computing\\[-0.8ex]
\small Australian National University\\[-0.8ex]
\small Canberra, ACT 2601, Australia\\
\small\tt brendan.mckay@anu.edu.au
}

\maketitle

\begin{abstract}
We enumerate factorisations of the complete graph into spanning regular
graphs in several cases, including when the degrees of all the factors
except for one or two are small.  The resulting asymptotic behaviour is 
seen to generalise the number of regular graphs in a simple way.
This leads us to conjecture a general formula when the number of
factors is vanishing compared to the number of vertices.
\end{abstract}

\nicebreak

\section{Introduction}

A classical problem in enumerative graph theory is the asymptotic
number of regular graphs.
This has now been solved by the overlap of three papers~\cite{LW,MW1,MW2},
with the interesting conclusion that the same formula holds in the sparse
and dense regimes.
\begin{thm}\label{thm:reg}
   Let $R_d(n)$ be the number of regular graphs of degree $d$ and
   order~$n$, where $1\le d\le n-2$.  Define
   $\lambda=d/(n-1)$.  Then, as $n\to\infty$,
   \[ 
     R_d(n) \sim \bigl( \lambda^{\lambda} (1-\lambda)^{1-\lambda} \bigr)
         ^{\!\binom{n}{2}} \binom{n-1}{d}^{\!n} e^{1/2}\, 2^{1/2}.
   \]
\end{thm}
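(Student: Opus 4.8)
The plan is to prove the formula separately on three ranges of $d$ whose union is all of $\{1,\dots,n-2\}$, and to note that since the claimed expression is \emph{identical} on all three, agreement on the overlaps is automatic. First I would reduce the work by complementation: $R_d(n)=R_{n-1-d}(n)$, and since $\lambda\mapsto 1-\lambda$ leaves both $\lambda^{\lambda}(1-\lambda)^{1-\lambda}$ and $\binom{n-1}{d}$ invariant, it suffices to treat $\lambda\le\tfrac12$. It is also convenient to recall that $R_d(n)$ counts symmetric $0$--$1$ matrices with zero diagonal and all row sums equal to $d$, which makes available both a probabilistic model (the pairing/configuration model) and an analytic one (extraction of a Cauchy coefficient).

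\textbf{Sparse range.} For $d$ small, up to some fixed power $n^{c}$, I would use the pairing model: take $n$ cells of $d$ points each, a total of $N=dn$ points, and a uniformly random perfect matching on them, of which there are $(N-1)!!$; each simple $d$-regular graph is the image of exactly $(d!)^{n}$ pairings, so
\[
 R_d(n)=\frac{(N-1)!!}{(d!)^{n}}\;\Pr[\text{the pairing is simple}].
\]
The surviving probability equals $\exp\!\bigl(-\tfrac{d^{2}-1}{4}-\tfrac{d^{3}}{12n}-\cdots\bigr)$; for bounded $d$ this is the classical constant $e^{-(d^{2}-1)/4}$, and for growing $d$ it follows from switching arguments counting pairings with a prescribed small number of loops and repeated pairs. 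Feeding Stirling's formula for $(N-1)!!$ and for $d!$ into the display, and using $\ln d!=d\ln d-d+\tfrac12\ln(2\pi d)+o(1)$, one reorganises the answer into the stated product of $\bigl(\lambda^{\lambda}(1-\lambda)^{1-\lambda}\bigr)^{\binom{n}{2}}$, $\binom{n-1}{d}^{n}$ and the constant $e^{1/2}2^{1/2}$; the reach of the method is limited by how accurately the multi-edge corrections can be tracked.

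\textbf{Dense range.} For $\lambda$ bounded away from $0$ (hence, after the reduction, in a compact subinterval of $(0,\tfrac12]$) I would instead write
\[
 R_d(n)=\frac{1}{(2\pi)^{n}}\int_{[-\pi,\pi]^{n}}\;\prod_{j<k}\bigl(1+e^{\mathrm i(\theta_j+\theta_k)}\bigr)\;\prod_{j}e^{-\mathrm i d\theta_j}\;d\theta,
\]
and evaluate it by the saddle-point method: the integrand is maximised near $\theta=0$, where, after rescaling, the quadratic part is governed by $\sum_{j<k}(\theta_j+\theta_k)^{2}$, whose spectrum ($n-1$ eigenvalues of one size and a single large one) produces the $\binom{n-1}{d}^{n}$-type factor, while the cubic and quartic terms in the expansion of $\log\bigl(1+e^{\mathrm i(\theta_j+\theta_k)}\bigr)$ supply the remaining constant. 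The technical content is routine for such integrals: showing the contribution away from a small box around $0$ is negligible, and that inside the box the Gaussian approximation holds with the claimed relative error. This is the McKay--Wormald high-degree analysis, which can be pushed down to about $\min(d,n-1-d)=\Omega(n/\log n)$ but not to genuinely sublinear $d$.

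\textbf{Intermediate range --- the main obstacle.} The real difficulty, and the content of the third paper, is the window $n^{c}\lesssim d\lesssim n/\log n$ that neither method covers: there the pairing model has too many multi-edges to prune, and the saddle-point integral has no clean dominant critical point. The way through is an interpolation/ratio argument: relate the number of graphs with degree sequence $\mathbf d$ to the number with $\mathbf d$ altered by adding one edge, derive an approximate recursion for these ratios (a discrete surrogate for a differential equation), and show the recursion is self-correcting, so its solution is pinned down by one boundary value. A convenient anchor is the random graph $G(n,\lambda)$, whose degree sequence concentrates sharply on the regular one, turning a normalisation identity into an asymptotic evaluation of $R_d(n)$ near $\lambda$. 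The hard part is making this rigorous across a window in which $\Theta(n^{2})$ edges are inserted one at a time: one must control the accumulation of multiplicative errors along the whole path and establish the contraction and concentration estimates uniformly in $d$. Once this is done, the three ranges cover $1\le d\le n-2$, the formulas coincide on the overlaps by inspection, and the theorem follows.
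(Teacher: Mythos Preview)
Your proposal is correct and matches the paper's treatment: the paper does not prove this theorem itself but states it as the combined result of \cite{MW2}, \cite{MW1} and \cite{LW}, and your three-range decomposition (pairing/switching for the sparse regime, the saddle-point integral for the dense regime, and the degree-sequence ratio recursion anchored at $G(n,\lambda)$ for the intermediate window) is precisely the content of those three references, in that order. The only remark is that your sparse sketch leans on the configuration model more than \cite{MW2} actually does---that paper works by switchings directly on graphs rather than on pairings---but this is a matter of presentation, not substance.
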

In the above, and throughout the paper, we tacitly assume that regular
graphs have even degree whenever the number of vertices is odd.

We can consider $R_d(n)$ to count the number partitions of the edges of~$K_n$
into two spanning regular subgraphs, one of degree $d$ and one of degree~$n-d-1$.
This suggests a generalization: how many ways are there to partition the edges
of~$K_n$ into more than two spanning regular subgraphs, of specified degrees?

For integers $d_0,d_1,\ldots,d_k\ge 1$ such that $\sum_{i=0}^k d_i=n-1$,
define $R(n; d_0,\ldots,d_k)$ to be the number of ways to partition the edges of
$K_n$ into spanning regular subgraphs of degree $d_0,d_1,\ldots,d_k$?
We conjecture that for $k=o(n)$, the asymptotic answer is a simple
generalisation of Theorem~\ref{thm:reg}.
\begin{conj}\label{conj:main}
  Define $\lambda_i=d_i/(n-1)$ for $0\le i\le k$.
  If $k=o(n)$, then $R(n; d_0,\ldots,d_k)\sim R'(n; d_0,\ldots,d_k)$, where
   \[
     R'(n;d_0,\ldots,d_k) = \biggl( \prod_{i=0}^k \lambda_i^{\lambda_i}\biggr)^{\!\binom{n}{2}}
    \binom{n-1}{d_0,\ldots,d_k}^{\!n} e^{k/4}\, 2^{k/2} ,
   \]
 using a multinomial coefficient.
\end{conj}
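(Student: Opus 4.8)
The plan is to write $R(n;d_0,\ldots,d_k)$ as a coefficient in a $(k{+}1)n$-variable generating function and to evaluate that coefficient by the same pair of techniques whose combination yields Theorem~\ref{thm:reg}: a multidimensional saddle-point analysis for the large factors and the switching method for the small ones, spliced together by first deleting the small factors and regarding what remains as a factorisation of a host graph. Colour each edge of $K_n$ by the index of the factor containing it and attach a variable $x_{v,c}$ to each vertex $v$ and colour $c\in\{0,\ldots,k\}$. Then
\[
  R(n;d_0,\ldots,d_k)=\Bigl[\,\prod_{v=1}^{n}\prod_{c=0}^{k}x_{v,c}^{d_c}\,\Bigr]\ \prod_{1\le u<v\le n}\Bigl(\,\sum_{c=0}^{k}x_{u,c}\,x_{v,c}\,\Bigr),
\]
where $[\,\cdot\,]$ extracts a coefficient, so by Cauchy's theorem $R$ equals a torus integral of this product divided by $\prod_{v,c}x_{v,c}^{d_c+1}$, with the radii to be fixed at a saddle point.

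For the analytic part, the product above is homogeneous of degree $n-1$ in each vertex's block of variables, so there are $n$ scaling degeneracies, and modulo these the fluctuation integral is $kn$-dimensional (for $k=1$ this is the familiar $n$-dimensional integral for $R_d(n)$). The $S_n$-symmetry places the saddle at a point where every vertex carries the same tuple $(\sigma_0,\ldots,\sigma_k)$ with $\sigma_c^2$ proportional to $\lambda_c$ at leading order; substituting and doing the Stirling-type bookkeeping reproduces exactly the factor $\bigl(\prod_{c}\lambda_c^{\lambda_c}\bigr)^{\binom n2}\binom{n-1}{d_0,\ldots,d_k}^{n}$ of $R'$. The constant $e^{k/4}2^{k/2}$ must then emerge from the Gaussian fluctuation integral: expand $\log$ of the integrand to second order, note that the Hessian commutes with the vertex-permutation action and so splits into a $k$-dimensional ``uniform'' block and $n{-}1$ copies of another $k$-dimensional block, take the product of the determinants, and add the contribution of the leading cubic and quartic terms. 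Proving that the integral really concentrates near this saddle --- the ``small box'' estimate --- is the step that, for a single factor, already required the full strength of~\cite{LW}.

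To reach the sparse regime and, ultimately, the whole range $k=o(n)$, the plan is to dispose of the small factors combinatorially first. Split the colours into a ``small'' set $S$ and a ``large'' set, build the $S$-coloured part of the factorisation by the configuration model --- break each vertex's $\sum_{i\in S}d_i$ points into colour classes of sizes $d_i$ and pair within classes --- and run switchings to show, in the usual way, that this equals the configuration count times a bounded factor coming from asymptotically independent Poisson-distributed defects: within-colour loops and repeated edges, and cross-colour coincident edges. Conditioning on a typical outcome leaves a host graph $H=K_n\setminus\bigcup_{i\in S}G_i$ whose structure (near-complete when the small factors are few, and in any case quasi-random of the appropriate density) lets the analytic count of the remaining factors go through with $H$ in place of $K_n$, since the McKay--Wormald and Liebenau--Wormald machinery tolerates host graphs whose adjacency matrix is a mild perturbation of $J-I$; multiplying the two counts and comparing with the multinomial and $\lambda_i^{\lambda_i}$ factors should then return $R'(n;d_0,\ldots,d_k)$.

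The main obstacle is uniformity in $k$: once $k\to\infty$ the fluctuation integral has dimension $kn$, so one needs eigenvalue bounds on the coloured Hessian and bounds on the cubic and quartic remainders that hold uniformly as $k$ ranges up to $o(n)$, a crude estimate over the $\binom{k+1}{2}$ colour-pair interactions being too lossy. Worse, the delicate degrees are the intermediate ones, around $d_i\asymp\sqrt{n\log n}$, precisely the range in which even the two-factor count $R_d(n)$ resisted any single method and had to be assembled from~\cite{LW,MW1,MW2}; handling many factors there at once, while also controlling the error made when the small factors are peeled off, is where the real work lies. I would expect this programme to deliver the conjecture whenever all but at most two of the $d_i$ are small --- the cases announced in the abstract --- with the fully general statement awaiting a genuinely uniform version of the dense saddle-point analysis.
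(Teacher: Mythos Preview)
The statement you are addressing is a \emph{conjecture}; the paper does not prove it in full and says so explicitly. What the paper establishes are the four special cases listed immediately after the conjecture, and none of those proofs uses the generating-function/saddle-point framework you outline. Case~2 (all but one factor of small total degree) is handled by a direct switching argument on random relabellings of one regular graph against another (Lemmas~\ref{lem:basic}--\ref{lem:tsum}, Theorem~\ref{thm:prob}, Corollary~\ref{cor:disj}), not via the configuration model; Case~3 is a citation of McLeod~\cite{McL}; Case~4 multiplies the subgraph-probability formula~\eqref{eq:join} from~\cite{ranx} by the counts from Case~2 and~\eqref{eq:reg}. The multidimensional complex-analytic programme you sketch is exactly what the paper defers to future work in Section~\ref{s:conclusion}, citing~\cite{mother}.

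As a research plan your outline is sensible and your coefficient identity is correct, but it is not a proof, and you acknowledge as much: the concentration (``small box'') estimate and the uniform-in-$k$ Hessian bounds are left open, and your closing sentence predicts only the special cases the paper already obtains by other, more elementary means. One concrete gap to flag in the splicing step: ``peel off the small factors, then run the analytic count on the host $H=K_n\setminus\bigcup_{i\in S}G_i$'' presumes a dense enumeration of regular factorisations inside a general near-complete host, but even the two-factor version of this (counting $d$-regular subgraphs of an arbitrary dense $H$) is not available in the cited literature at the generality you invoke; \cite{ranx} supplies~\eqref{eq:join} only for one dense factor against one sparse one, and the paper uses precisely that and no more. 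So your programme, as written, does not yet recover even Case~4 without re-importing the paper's own ingredients.
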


We will prove the conjecture in four cases.
\begin{enumerate}\itemsep=0pt
  \item $k=1$ and $1\le d_1\le n-2$.
  \item $\sum_{i=1}^k d_i=o(n^{1/3})$ and $\sum_{1\le i\le j<t\le k} d_id_jd_t^2=o(n)$.
  \item $k=o(n^{5/6})$ and $d_1=\cdots=d_k=1$.
  \item $\min\{d_1,n-1-d_1\} \ge cn/\log n$ for some constant $c>\frac23$
        and $\sum_{i=2}^k d_i = O(n^\eps)$ for sufficiently small $\eps>0$.
\end{enumerate}

Case~1 is just a restatement of Theorem~\ref{thm:reg}, since
$R_d(n)=R(n;n{-}1{-}d,d)$.
Case~2 will follow from a switching argument applied to the probability of two
randomly labelled regular graphs being edge-disjoint.
Case~3 is a consequence of~\cite{McL}.
Case~4 will follow from a combination of~\cite{ranx}, \cite{MW2} and Part~1.
Case~2 with $k=2$, $d_1=2$ and $d_2=O(1)$ appears in~\cite{HR}.

\section{Two regular graphs}\label{s:stepone}

We begin by considering the case of two arbitrary regular graphs
which are randomly labelled.  What is the probability of them being
edge-disjoint?

\begin{lemma}\label{lem:basic}
Let $D$ and $H$ be regular graphs on $n$ vertices, where $D$ is $d$-regular 
for $d\ge 1$ and $H$ is $h$-regular for $h\ge 1$.
Suppose $d^2h^2=o(n)$ and define $M = \lceil \max\{8dh,\log n\}\rceil$, 
Then, with probability $1- O(\frac{d^2 h^2}{n})$, a random relabeling 
of the vertices of $H$ does not have any common path of length 2 with $D$
and the number of common edges with $D$ is less than~$M$.
\end{lemma}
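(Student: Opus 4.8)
The plan is to use a two-stage first-moment (union bound) argument. Let $\sigma$ be a uniformly random permutation of the vertex set $[n]$, and write $\sigma H$ for the relabelled copy of $H$. We want to control two bad events: (i) $\sigma H$ and $D$ share a common path on three vertices (a ``cherry''), and (ii) $\sigma H$ and $D$ share at least $M$ common edges. I would handle (i) first, as it is the cleaner of the two and the bound it gives will be the dominant error term.

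\emph{Step 1: no common cherry.} A common $P_3$ consists of an ordered-up-to-reversal triple $(u,v,w)$ with $uv,vw\in E(D)$ and $\sigma^{-1}u\,\sigma^{-1}v,\ \sigma^{-1}v\,\sigma^{-1}w\in E(H)$. The number of cherries in $D$ is $\frac12 n d(d-1)\le \tfrac12 nd^2$, and likewise $H$ has at most $\tfrac12 nh^2$ cherries. For a fixed cherry $(u,v,w)$ in $D$ and a fixed cherry $(x,y,z)$ in $H$, the probability that $\sigma$ maps $\{x,y,z\}$ onto $\{u,v,w\}$ with $y\mapsto v$ (the two ways of matching the endpoints) is $2\cdot\frac{(n-3)!}{n!}=\frac{2}{n(n-1)(n-2)}$. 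Hence by the union bound the expected number of common cherries is at most
\[
  \tfrac12 nd^2 \cdot \tfrac12 nh^2 \cdot \frac{2}{n(n-1)(n-2)} = O\!\Bigl(\frac{d^2h^2}{n}\Bigr).
\]
So with probability $1-O(d^2h^2/n)$ there is no common cherry, which in particular implies that the common edges of $\sigma H$ and $D$ form a (partial) matching.

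\emph{Step 2: few common edges.} Condition on the event from Step 1, so the common edges form a matching; call its size $X$. The expected number of common edges (unconditionally) is $\tfrac12 nd\cdot\tfrac12 nh\cdot\frac{1}{\binom n2\cdot 2}\cdot(\text{const})=\Theta(dh)$ — more precisely $\mathbb{E}X = \frac{nd\cdot nh/2}{n(n-1)}=\frac{ndh}{2(n-1)}\le dh$ for the unconditioned count, since each of the $nd/2$ edges of $D$ has probability $\frac{nh/2}{\binom n2}$ of being an edge of $\sigma H$. Because $M\ge 8dh \ge 8\,\mathbb{E}X$, I want a concentration statement showing $\Pr[X\ge M]$ is small. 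The cleanest route is a switching/moment argument: estimate the factorial moments $\mathbb{E}\binom{X}{m}$ by counting ordered $m$-subsets of $E(D)$ that are simultaneously mapped into $E(H)$; one gets $\mathbb{E}\binom{X}{m}\le \frac{1}{m!}\bigl(\frac{nd}{2}\bigr)^m\bigl(\frac{nh/2}{\binom n2 - m}\bigr)^m \le \frac{(2dh)^m}{m!}$ for $m$ up to $M$ (using $d^2h^2=o(n)$ to absorb the lower-order corrections and the fact that distinct $D$-edges land on disjoint vertex pairs). Then $\Pr[X\ge M]\le \mathbb{E}\binom{X}{M}\le \frac{(2dh)^M}{M!}\le (2e\,dh/M)^M \le 4^{-M}\le n^{-1}$, using $M\ge 8dh$ and $M\ge \log n$. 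Combining with Step 1 gives the claimed overall failure probability $O(d^2h^2/n)$, since $n^{-1}=O(d^2h^2/n)$.

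\emph{Main obstacle.} The routine part is Step 1; the delicate point is making the tail bound in Step 2 rigorous, because the events ``$e\in E(\sigma H)$'' for different $D$-edges $e$ are not independent, and the naive factorial-moment calculation needs the error terms from the $-m$ in the denominator (and from edges sharing vertices) to be genuinely negligible. This is exactly where the hypothesis $d^2h^2=o(n)$ and the definition $M=\lceil\max\{8dh,\log n\}\rceil$ are used: the former keeps $m\le M = O(\sqrt n\,)$-size corrections of order $m^2/n = o(1)$, and the latter simultaneously beats the combinatorial growth $(2dh)^m/m!$ and the target $n^{-1}$. An alternative to the moment bound, if one prefers, is a switching argument directly on pairs $(\text{labelling},\text{set of }j\text{ common edges})$ showing the ratio of the count with $j{+}1$ common edges to that with $j$ is at most $2dh/(j+1)$, which yields the same geometric tail; I would present whichever is shorter.
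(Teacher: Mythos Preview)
Your proposal is correct and follows essentially the same route as the paper. Step~1 is identical: a first-moment bound on common $P_3$'s giving $O(d^2h^2/n)$. For Step~2 the paper also uses a first-moment bound on $M$-subsets of common edges---it sums over $M$-subsets $S\subseteq E(H)$ and bounds the probability that $\sigma(S)\subseteq E(D)$, obtaining $E\le(edh/M)^M\le e^{-M}\le n^{-1}$, which is exactly your $\mathbb{E}\binom{X}{M}$ computation from the $H$-side with slightly different constants. The paper's displayed probability formula likewise only literally applies when $S$ is a matching, so the point you flag as the ``main obstacle'' (restricting to matchings via Step~1) is implicit in the paper as well.
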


\begin{proof}

The probability that a given path of length 2 of $H$ is mapped to a path of length~2
of $D$ is $\frac{2n\binom{d}{2}(n-3)!}{n!}$ and
the number of paths of length 2 of $H$ is $n\binom{h}{2}$.
So the expected number of paths of length 2 which are mapped to a path of length 2 is at most 
\[
  \frac{2n^2\binom{d}{2}\binom{h}{2}(n-3)!}{n!}=O\Bigl(\frac{d^2 h^2}{n}\Bigr).
\]
So with probability $1- O(\frac{d^2 h^2}{n})$, a random relabeling of the vertices of $H$ does not have
any common path of length 2 with $D$.     

Considering  a given set  $S$ of $M$ edges of $H$, the probability that a random relabeling of the vertices of $H$  maps $S$ to $M$ distinct edges of $D$ is
\[ 
  \frac{\binom{nd/2}{M} M!\, 2^M(n-2M)!}{n!}
\]
Since there are $\binom{nh/2}{M}$ sets of $M$ different edges of $H$,  the expected number of such relabeling is at most
\[
  E=\frac{\binom{nh/2}{M}\binom{nd/2}{M}M!\,2^M(n-2M)!}{n!} \le \frac{n^{2M}e^Md^Mh^M}{2^MM^M(n-2M)^{2M} }.
\] 
For big enough $n$, $\frac{n^2}{(n-2M)^2} \le 2 $.
Hence, the expected number of such relabeling is at most $O((\frac{edh}{M})^M)$.
Since $M\ge 8dh \ge e^2dh$ and $M\ge \log n$, $(\frac{edh}{M})^M\le e^{-M}\le n^{-1}$.
This is dominated by the term $\frac{d^2h^2}{n}$ from the first part of the proof, so
 with probability  $1- O(\frac{d^2 h^2}{n})$  a random relabeling 
of the vertices of $H$ does not have any common path of length 2 with $D$ and the number of common edges with $D$ is less than $M$.
\end{proof}

Let $\calL(t)$ be the set  of all relabeling of the vertices of $H$ with no common paths of length 2 with $D$ and exactly $t$ common distinct edges with $D$.
     Define $L(t)=|\calL(t)|$, so in particular
    the number of relabeling of the vertices of $H$ with no common edges with $D$ is $L(0)$. 
    Let
    \[
        T = \sum_{t =0}^{M-1}
                L(t)
    \]
   
    In the next
   step we estimate the value of $T/L(0)$ by the switching method.

\section{The switching}\label{s:switching}

A \textit{forward switching} is a permutation $(a \,  e)(b \, f)$ of the vertices of $H$ such that
\begin{itemize}\itemsep=0pt
 \item vertices $a$, $e$, $b$, $f$ are all distinct.
 \item  $ab$ is a common edge of $D$ and $H$,
 \item  $ef$ is non-edge of $D$, and 
 \item after the permutation, the common edges of $D$ and $H$ are the
         same except that $ab$ is no longer a common edge.
\end{itemize}

\noindent
A \textit{reverse switching} is a permutation $(a \,  e)(b \, f)$ of the vertices
      of $H$ such that
\begin{itemize}\itemsep=0pt
 \item vertices $a$, $e$, $b$, $f$ are all distinct.
 \item  $ab$ is an edge of $H$ that is not an edge of $D$,
 \item  $ef$  is an edge of $D$ that is not an edge of $H$, and 
 \item after the permutation, the common edges of $D$ and $H$ are the
         same except that $ef$ is a common edge of $D$ and $H$.
\end{itemize}

\begin{lemma}\label{lem:Lrat}
Assume $d,h\ge 1$, $d^2h^2=o(n)$, and define
$m_d=\frac{nd}{2}$ and $m_h=\frac{nh}{2}$.
Then for $1\le t\le M$ and $n\to\infty$ we have uniformly
\[
  \frac{L(t)}{L(t-1)}=\frac{(m_d-(t-1)) (m_h-(t-1))}{t\,\binom{n}{2}}
 \Bigl(1+O\Bigl(\frac{dh}{n}\Bigr)\Bigr).
\]
\end{lemma}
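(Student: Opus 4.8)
The plan is to run the standard switching balance between consecutive levels $\calL(t-1)$ and $\calL(t)$. Every forward switching sends an element of $\calL(t)$ to an element of $\calL(t-1)$, provided we also insist that the image have no common path of length $2$ (so that it really lies in $\calL(t-1)$); and, since the transposition $(a\,e)(b\,f)$ is an involution and, as one checks, the chosen non-edge $ef$ of $D$ cannot have been an edge of the relabelled $H$ either (otherwise $ab$ would remain a common edge), that same transposition is a reverse switching carrying the image back to the original. Hence the forward switchings out of $\calL(t)$ are in bijection with the reverse switchings out of $\calL(t-1)$. Writing $f(L)$ for the number of forward switchings applicable to $L\in\calL(t)$ and $r(L')$ for the number of reverse switchings applicable to $L'\in\calL(t-1)$, this gives
\[
  \sum_{L\in\calL(t)} f(L)\;=\;\sum_{L'\in\calL(t-1)} r(L').
\]
It therefore suffices to show, uniformly over the relevant $L$ and $L'$,
\[
  f(L)=2t\Bigl(\tbinom{n}{2}-m_d\Bigr)\Bigl(1+O\bigl(\tfrac{dh}{n}\bigr)\Bigr),
  \qquad
  r(L')=2\bigl(m_d-(t-1)\bigr)\bigl(m_h-(t-1)\bigr)\Bigl(1+O\bigl(\tfrac{dh}{n}\bigr)\Bigr);
\]
dividing these, and using $\tbinom{n}{2}-m_d=\tbinom{n}{2}\bigl(1+O(\tfrac dn)\bigr)$ together with $d^2h^2=o(n)$ (so that $\tfrac{dh}{n}=o(1)$), then yields the stated ratio.

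For the first estimate, fix $L\in\calL(t)$. A forward switching is determined by a common edge $\{a,b\}$ of $D$ and the relabelled $H$, a non-edge $\{e,f\}$ of $D$, and a choice of which of $e,f$ is transposed with $a$, giving a gross count of $2t\bigl(\binom{n}{2}-m_d\bigr)$. One then subtracts the choices that do not yield a valid forward switching into $\calL(t-1)$; the obstructions are: (i) $\{e,f\}$ meets $\{a,b\}$; (ii) $(a\,e)(b\,f)$ turns some non-common edge into a common edge, or removes a common edge other than $ab$; (iii) $(a\,e)(b\,f)$ creates a common path of length~$2$. A key structural point is that, since $L$ has no common $P_2$, its common edges form a matching, so no common edge other than $\{a,b\}$ is incident with $\{a,b\}$. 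Any witness to (ii) or (iii) is incident with one of $a,b,e,f$, and a case analysis using the regularity of $D$ and $H$ and this matching property shows that each such bad event either pins one of $e,f$ to a set of size $O(dh)$ (a $D$-neighbourhood of an $H$-neighbour of $a$ or $b$, or vice versa) with the other vertex free, or forces one of $e,f$ to lie among the $O(t)$ endpoints of the other common edges with the other vertex free. Counting these, the total number of excluded choices is $O(\tfrac{dh}{n})$ times the gross count, giving $f(L)$ as claimed, uniformly in $L$.

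The second estimate is proved in the same way. Fix $L'\in\calL(t-1)$; a reverse switching is determined by an edge $\{a,b\}$ of the relabelled $H$ that is not an edge of $D$ ($m_h-(t-1)$ choices), an edge $\{e,f\}$ of $D$ that is not an edge of the relabelled $H$ ($m_d-(t-1)$ choices), and which of $e,f$ is transposed with $a$ ($2$ choices). The bad events --- $\{a,b,e,f\}$ not all distinct, the transposition altering some common edge other than making $ef$ common, or creating a common $P_2$ --- are again localised to $\{a,b,e,f\}$, and the analogous neighbourhood counts (now with $\{e,f\}$ the $D$-edge being inserted and $\{a,b\}$ the $H$-edge being removed) bound their number by $O(\tfrac{dh}{n})$ times the gross count $2(m_d-(t-1))(m_h-(t-1))$, which gives $r(L')$ as claimed.

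The substance of the argument is the defect estimate behind (ii) and (iii): one has to verify that creating or destroying a stray common edge, or a stray common $P_2$, genuinely forces one of the four transposed vertices into an explicitly $O(dh)$-sized set (or into the small set of common-edge endpoints), while correctly handling the degenerate configurations in which $e$ or $f$ is adjacent in $D$ or $H$ to $a$ or $b$, or in which one of the other common edges abuts $\{a,b\}$. The hypothesis $d^2h^2=o(n)$ is exactly what makes all of these contributions a lower-order multiple of the main term and makes $\tfrac{dh}{n}=o(1)$, so that the two displayed estimates may be divided to give
\[
  \frac{L(t)}{L(t-1)}=\frac{(m_d-(t-1))(m_h-(t-1))}{t\binom{n}{2}}\Bigl(1+O\bigl(\tfrac{dh}{n}\bigr)\Bigr).
\]
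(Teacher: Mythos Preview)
Your proposal is correct and follows essentially the same switching argument as the paper: count forward switchings out of $\calL(t)$ and reverse switchings out of $\calL(t-1)$, show each count equals its main term up to a relative $O(dh/n)$ defect, and take the ratio. The paper's proof is organised the same way, with slightly more explicit bookkeeping of the bad choices (it lists the four neighbour conditions and gives the concrete bounds $4n$, $4tn$, $4dh(n-2(t-1))$ for forward and $4n$, $2td$, $4d^2h$ for reverse), whereas you summarise these as ``pinning one of $e,f$ to an $O(dh)$-sized neighbourhood set or to the $O(t)$ common-edge endpoints''; the content is the same.
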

\begin{proof}
By using a forward switching, we will convert  a relabeling $\pi \in \calL(t)$ to a relabeling $\pi'\in \calL(t-1)$.
Without lose of generality,  we suppose that $\pi=(1)$,  since our estimates
will be independent of the structure of $H$ other than its degree.

There are $t$ choices for edge $ab$ and at most 
$2(\binom{n}{2}-m_d)=2\binom{n}{2}(1+O(d/n))$ choices for $e$ and $f$.
But some of the choices of $e$ and $f$ are not suitable. There are at most $4n$  choices of $e$ and $f$ such that $a$, $e$, $b$, $f$ are not all distinct.

 Since  $D$ and $H$   have  no paths of length 2 in common, there are no other  common edges of $H$ and $D$ incident to $a$ or $b$.
If neither $e$ nor $f$ are an end vertex of a common edge, then no common edge is destroyed.  Therefore, 
there are at most $4tn$ pairs $\{e,f\}$ that can destroy a common edge.

If none of the following happens, no new common edge is created by the forward switching:
\begin{itemize}\itemsep=0pt
\item vertex $e$  has  a neighbor in $H$ which is  a neighbor of $a$ in $D$.
\item vertex $e$  has  a neighbor in $D$ which is  a neighbor of $a$ in $H$.
\item vertex $f$  has  a neighbor in $H$ which is  a neighbor of $b$ in $D$.
\item vertex $f$  has  a neighbor in $D$ which is  a neighbor of $b$ in $H$.
\end{itemize}

So, there are at most $4dh$  vertices which do not satisfy one of the above conditions. 
So there are at most $4dh(n-2(t-1))$  unsuitable couples which can produces some new common edges. Therefore the number of forward switchings is
\[
  W_F=t\binom{n}{2} \Bigl(1+O\Bigl(\frac{dh}{n} \Bigr)\Bigr).
\]

A reverse switching converts a relabeling $\pi' \in \calL(t-1)$ to a relabeling $\pi$ in $\calL(t)$.
Again, without loss of generality, we can suppose that $\pi'=(1)$.
There are 
$(m_h-t+1)$ choices for edge $ab$ and there is at most $2(m_d-t+1)$ choices for $e$ and~$f$. But some of the choices of $e$ and $f$ are not suitable.

 There are at most $4n$  choices of $e$ and $f$ such that $a$, $e$, $b$, $f$ are not all distinct.
 If neither $e$ nor $f$ are an end vertex of a common edge, then no common edge  is destroyed and also no common edge except $ef$  has an end vertex in $e$ or $f$.
 So, there are at most $2td$ unsuitable choices for $e$ and $f$ that may can destroy a common edge or construct a common path of length 2. 

If none of the following happens, no other new common edge obtains after doing the reverse switching:

\begin{itemize}\itemsep=0pt
\item vertex $e$  has  a neighbor in $H$ which is  a neighbor of $a$ in $D$.
\item vertex $e$  has  a neighbor in $D$ which is  a neighbor of $a$ in $H$.
\item vertex $f$  has  a neighbor in $H$ which is  a neighbor of $b$ in $D$.
\item vertex $f$  has  a neighbor in $D$ which is  a neighbor of $b$ in $H$.
\end{itemize}

So, there are at most $4dh$ vertices which does not satisfy one of the above conditions.
So there are at most $4d^2h$  unsuitable couples which can produces some other new common edges. 

 Therefore the number of reverse switchings is
\[
  W_R= \bigl(m_d-t+1\bigr) \bigl(m_h-t+1\bigr) \Bigl(1+O\Bigl(\frac{dh}{n}\Bigr)\Bigr).
\]

By considering the number of forward switchings  and  the number of reverse switchings, we have
\[
  \frac{L(t)}{L(t-1)}=\frac{(m_d-t+1) (m_h-t+1)}{{t \binom{n}{2}}}
 \Bigl(1+O\Bigl(\frac{dh}{n}\Bigr)\Bigr). \qedhere
\]
\end{proof}

We will need the following summation lemma from \cite[Cor.~4.5]{GreenhillMcKay2006}.

\begin{lemma}\label{sumlemma}
Let $Z \ge 2$ be an integer and, for $1\le i \le Z$, let real numbers $A(i)$, $B(i)$
be given such that $A(i) \ge 0$ and $1 - (i - 1)B(i) \ge 0$. Define
 $A_1 = \min _{i=1}^{Z} A(i)$, $A_2 =\max_{i=1}^{Z} A(i), 
 C_1 = \min_{i=1}^{Z} A(i)B(i)$ and $C_2 = \max_{i=1}^{Z} A(i)B(i)$.
 Suppose that there exists
$\hat{c}$ with $0 < \hat{c} < \frac13$
 such that $\max\{A/Z, |C|\} \le \hat{c}$
 for all $A \in [A_1,A_2]$,
$C \in [C_1,C_2]$. Define $n_0, \ldots , n_Z$ by $n_0 = 1$ and
\[
   n_i/ n_{i-1}=\dfrac1i A(i) (1 - (i -1)B(i)) 
\]
for $1 \le i \le Z$, with the following interpretation: if $A(i) = 0$ or $1 - (i - 1)B(i) = 0$, then
$n_j = 0$ for $ i \le j \le Z$. Then 
\[
   \Sigma_{1} \le \sum_{i=0}^{Z}n_i \le \Sigma_2,
\]
where
\[
  \Sigma_1 = \exp\bigl(A_1 - \dfrac12 A_1C_2)-(2e\hat{c}\bigr)^Z
\]
and 
\[
    \Sigma_2=\exp\bigl(A_2-\dfrac12A_2C_1+\dfrac12 A_2C_1^2\bigr)+ (2e\hat{c} )^Z. 
    \tag*{\qed}
\]
\end{lemma}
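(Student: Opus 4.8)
The plan is to pinch the sequence $(n_i)$ between two sequences with constant coefficients, sum those in essentially closed form, and then expand a logarithm to the required order. Put $C(j):=A(j)B(j)$; unfolding the recursion gives
\[
  n_i=\frac1{i!}\prod_{j=1}^{i}\bigl(A(j)-(j-1)C(j)\bigr),
\]
and the hypotheses $A(j)\ge0$, $1-(j-1)B(j)\ge0$ make every factor here nonnegative. From $A(j)-A_2\le0\le(j-1)\bigl(C(j)-C_1\bigr)$ we get $A(j)-(j-1)C(j)\le A_2-(j-1)C_1$; since the left side is $\ge0$, so is $A_2-(j-1)C_1$ for all $j\le Z$, and comparing factor by factor,
\[
  n_i\le\bar n_i:=\frac{A_2^i}{i!}\prod_{j=0}^{i-1}\Bigl(1-\frac{jC_1}{A_2}\Bigr).
\]
Symmetrically $A(j)-(j-1)C(j)\ge A_1-(j-1)C_2$ gives $n_i\ge\underline n_i:=\frac{A_1^i}{i!}\prod_{j=0}^{i-1}\bigl(1-jC_2/A_1\bigr)$ as long as these factors stay nonnegative, and $\underline n_i:=0$ from then on, which is a legitimate underestimate. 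So it suffices to bound $\sum_{i\ge0}\bar n_i$ above, $\sum_{i=0}^{Z}\underline n_i$ below, and the truncated tails of the frozen sequences.

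For the frozen sums I would compare with the model sum $\sum_i\frac{x^i}{i!}\prod_{j=0}^{i-1}(1-j/N)=(1+x/N)^N$ ($N$ a positive integer), by sandwiching $\beta:=C_1/A_2$ (resp.\ $C_2/A_1$) between reciprocals of consecutive integers and using monotonicity of $\prod_{j<i}(1-j\beta)$ in $\beta$; alternatively one can use $1-j\beta\le e^{-j\beta}$ and estimate $\sum_i\frac{x^i}{i!}e^{-\beta\binom{i}{2}}$ by a Laplace/saddle-point analysis around the maximising index $i\approx x(1-x\beta)$, whose width factor is essential. Either way, expanding the logarithm $N\log(1+x/N)=x-\tfrac{x^2}{2N}+\cdots$ (equivalently, carrying the saddle-point expansion to second order) and bounding the remaining tail of the series using $|C|\le\hat c<\tfrac13$ yields
\[
  \sum_{i\ge0}\bar n_i\le\exp\!\bigl(A_2-\tfrac12A_2C_1+\tfrac12A_2C_1^2\bigr),
\]
where the coefficient $\tfrac12$ (rather than the exact $\tfrac13$) of $A_2C_1^2$ generously absorbs the rounding slack; the matching lower estimate drops the remaining, correctly signed, higher-order terms and gives $\sum_i\underline n_i\ge\exp\!\bigl(A_1-\tfrac12A_1C_2\bigr)$.

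It remains to control the truncation at $Z$. If $C_1>0$ the frozen sequence $\bar n_i$ self-truncates (and decays super-exponentially even before that), so $\sum_{i>Z}\bar n_i$ is negligible, and likewise for $\underline n_i$ when $C_2>0$. If $C_1\le0$, then $\bar n_i/\bar n_{i-1}=\tfrac1i\bigl(A_2-(i-1)C_1\bigr)\to|C_1|<\tfrac13$, so $\bar n_i$ is eventually geometric with ratio below $\tfrac13$, and combined with $A_2/Z\le\hat c$ one checks that $\sum_{i>Z}\bar n_i\le(2e\hat c)^Z$; similarly for $\underline n_i$ when $C_2\le0$. Assembling the estimates gives $\Sigma_1\le\sum_{i=0}^{Z}n_i\le\Sigma_2$.

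The hard part is the quantitative estimate of the two frozen sums together with the tails: one must carry out the rounding comparison (or the saddle-point expansion) with uniformly valid error terms --- delicate when $A_1$ is small, since then the trivial bound $e^{A}$ is too weak because $-\tfrac12AC+\tfrac12AC^2$ can be negative --- keep track of the signs of $C_1$ and $C_2$ throughout, and verify in every regime that the accumulated losses (from the exponential inequalities, the integer rounding, and the truncation at $Z$) stay within the budget $(2e\hat c)^Z$. This is exactly where the hypotheses $|C|\le\hat c<\tfrac13$ (not merely $|C|<1$) and $A/Z\le\hat c$ are used; the shape of the main term is otherwise dictated by the heuristic $n_i\approx\frac{A^i}{i!}e^{-\beta\binom{i}{2}}$.
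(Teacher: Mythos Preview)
The paper does not prove this lemma at all: it is quoted verbatim as Corollary~4.5 of Greenhill--McKay--Wang~\cite{GreenhillMcKay2006} (note the sentence ``We will need the following summation lemma from \cite[Cor.~4.5]{GreenhillMcKay2006}'' and the \qed\ appended directly to the statement). So there is no proof in this paper to compare your attempt against.

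That said, your outline is a reasonable reconstruction of how such a result is proved, and indeed the argument in the cited reference proceeds along the same lines: freeze the coefficients at their extremes, recognise the resulting sum as a truncated binomial $(1+x/N)^N$, expand the logarithm, and control the tail using $\max\{A/Z,|C|\}\le\hat c<\tfrac13$. Your write-up is honest about where the work lies --- the uniform tail estimate and the sign-dependent case analysis for $C_1,C_2$ --- but as it stands it is a sketch rather than a proof: the ``either way'' alternative between integer rounding and saddle-point is not carried out, and the crucial inequality $\sum_{i>Z}\bar n_i\le(2e\hat c)^Z$ is asserted (``one checks'') rather than verified. If you want to turn this into a self-contained proof you will need to pin down one of the two routes and actually execute the tail bound; otherwise, in the context of this paper, simply citing \cite[Cor.~4.5]{GreenhillMcKay2006} is what the authors do and is entirely appropriate.
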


\begin{lemma}\label{lem:tsum}
Let $d,h\ge 1$ and $d^2h^2 = o(n)$.  Then, as $n\to\infty$,
\[
  \frac{T}{L(0)}=\exp\Bigl(\dfrac12 dh+O\Bigl(\frac{d^2h^2}{n}\Bigr)\Bigr).
\]
\end{lemma}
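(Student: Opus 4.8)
The plan is to recognise $T/L(0)=\sum_{i=0}^{M-1}n_i$, where $n_0=1$ and $n_i/n_{i-1}=L(i)/L(i-1)$, and to apply Lemma~\ref{sumlemma} with $Z=M-1$ (an integer $\ge 7$).  By Lemma~\ref{lem:Lrat}, for $1\le i\le Z$ we can write $n_i/n_{i-1}=\tfrac1i A(i)\bigl(1-(i-1)B(i)\bigr)$ with the exact factorisation
\[
   A(i)=\frac{m_d m_h}{\binom n2}\Bigl(1+O\bigl(\tfrac{dh}{n}\bigr)\Bigr),\qquad
   B(i)=\frac1{m_d}+\frac1{m_h}-\frac{i-1}{m_d m_h}\quad(i\ge 2),
\]
since $1-(i-1)B(i)=(1-\tfrac{i-1}{m_d})(1-\tfrac{i-1}{m_h})$ carries none of the error (the value $B(1)$ is irrelevant).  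A direct computation then gives $\binom n2\,A(i)B(i)=\bigl(m_d+m_h-(i-1)\bigr)\bigl(1+O(\tfrac{dh}{n})\bigr)$.

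Next I would estimate the quantities in Lemma~\ref{sumlemma}.  Since $m_dm_h/\binom n2=\tfrac{dh}{2}\cdot\tfrac{n}{n-1}$ and $dh=o(\sqrt n)$, we get $A_1,A_2=\tfrac{dh}{2}+O(\tfrac{d^2h^2}{n})$.  As $i-1\le M-1=O(\max\{dh,\log n\})=o(\min\{m_d,m_h\})$, both $1-\tfrac{i-1}{m_d}$ and $1-\tfrac{i-1}{m_h}$ lie in $[1-o(1),1]$, so $1-(i-1)B(i)\ge 0$; the same bound gives $m_d+m_h-(i-1)=\tfrac{n(d+h)}{2}(1+o(1))$, so $C_1,C_2$ are positive and equal to $O(\tfrac{d+h}{n})=O(\tfrac{dh}{n})$ (using $d+h\le 2dh$).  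For the remaining hypothesis I would take the fixed constant $\hat c=1/(2e^2)<\tfrac13$.  Because $Z=M-1\ge\max\{8dh,\log n\}-1$, when $8dh\ge\log n$ (so $dh\to\infty$) we have $A_2/Z\le\tfrac12 dh\,(1+o(1))/(8dh-1)\to\tfrac1{16}$, and otherwise $dh<\tfrac18\log n$ forces $A_2/Z\le\tfrac12 dh\,(1+o(1))/(\log n-1)\to 0$; so in all cases $A_2/Z\le\tfrac1{16}(1+o(1))<\hat c$ for large $n$, while $|C_1|,|C_2|=O(\tfrac{dh}{n})\to 0<\hat c$.  Hence all hypotheses of Lemma~\ref{sumlemma} hold.

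Applying Lemma~\ref{sumlemma} then gives $\Sigma_1\le T/L(0)\le\Sigma_2$ with
\[
  \Sigma_1=\exp\bigl(A_1-\tfrac12 A_1C_2\bigr)-(2e\hat c)^Z,\qquad
  \Sigma_2=\exp\bigl(A_2-\tfrac12 A_2C_1+\tfrac12 A_2C_1^2\bigr)+(2e\hat c)^Z.
\]
In each exponent $A_jC_k=O(\tfrac{d^2h^2}{n})$ and $A_jC_k^2=O(\tfrac{d^3h^3}{n^2})=O(\tfrac{d^2h^2}{n})$, so both exponents equal $\tfrac12 dh+O(\tfrac{d^2h^2}{n})$, i.e.\ each $\exp(\cdot)=e^{dh/2}\bigl(1+O(\tfrac{d^2h^2}{n})\bigr)$.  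Since $2e\hat c=e^{-1}$ we have $(2e\hat c)^Z=e^{-Z}\le e^{-(\log n-1)}=O(1/n)$, and as $e^{dh/2}\tfrac{d^2h^2}{n}\ge e^{1/2}/n$ this tail is $O\bigl(e^{dh/2}\tfrac{d^2h^2}{n}\bigr)$ and is absorbed into the relative error.  Therefore $\Sigma_1$ and $\Sigma_2$ both equal $\exp(\tfrac12 dh+O(\tfrac{d^2h^2}{n}))$, and squeezing gives the lemma.

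The main obstacle is the calibration in the middle step: one needs $\max\{A/Z,|C|\}\le\hat c$ for a \emph{fixed} $\hat c<\tfrac13$ that is at the same time small enough that $(2e\hat c)^Z$ — whose decay is controlled only through $Z\ge\log n-1$ — is $O(1/n)$, so that it does not dominate the true error $O(e^{dh/2}\tfrac{d^2h^2}{n})$.  This forces $\hat c\le 1/(2e^2)$, and it works only because the cutoff $M=\lceil\max\{8dh,\log n\}\rceil$ keeps $A/Z$ pinned near $\tfrac1{16}<\tfrac1{2e^2}$ (the role of the constant $8$) while the $\log n$ term both controls $A/Z$ for small $dh$ and makes the geometric tail $(2e\hat c)^Z$ decay faster than a fixed power of $n$.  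Everything else is routine asymptotic bookkeeping under $d^2h^2=o(n)$.
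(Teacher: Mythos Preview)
Your proof is correct and follows essentially the same route as the paper: rewrite $L(t)/L(t-1)$ in the form $\tfrac1t A(t)(1-(t-1)B(t))$, apply Lemma~\ref{sumlemma} with $Z=M-1$, verify $\max\{A/Z,|C|\}\le\hat c$ via the definition of $M$, and check that $(2e\hat c)^Z=o(1/n)$ is absorbed into the error. The paper makes the cosmetic choices $B(t)=2(d+h)/(dhn)$ (constant in $t$) and $\hat c=1/15$ in place of your exact $B(i)$ and $\hat c=1/(2e^2)$; your case analysis for $A_2/Z$ when $8dh<\log n$ slightly overstates ``$\to 0$'' (the ratio is only $\lesssim 1/16$ in general), but this is still below $1/(2e^2)$ so the conclusion is unaffected.
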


\begin{proof}
The condition $d^2h^2 = o(n)$ and the definition of $M$
allow us to write Lemma~\ref{lem:Lrat} as
\[
  \frac{L(t)}{L(t-1)} = 
  \frac{dh}{2t} \Bigl(1 - (t-1)\frac{2(d+h)}{d h n}\Bigr)
       \Bigl(1 + O\Bigl(\frac{dh}{n}\Bigr)\Bigr),
\]
where the implicit constant in the $O(\,)$ depends on $t$ but is
uniformly bounded for $1\le t\le M$. 
For $0 \le t \le M$, define
\begin{align*}
A(t) &= \dfrac12 dh \Bigl(1 + O\Bigl(\frac{dh}{n}\Bigr)\Bigr) \\
B(t) &= \frac{2(d+h)}{d h n},
\end{align*}

Then  $A(t) \ge 0$ and $1 - (t - 1)B(t) \ge 0$ as $n\to\infty$.
Define $A_1$, $A_2$, $C_1$ and $C_2$ as in Lemma~\ref{sumlemma}. 
This gives
\begin{align*}
A_1,A_2&=\dfrac12 dh\Bigl(1 + O\Bigl(\frac{dh}{n}\Bigr)\Bigr)
 = O(dh) , \\
C_1,C_2&= O\Bigl( \frac{d+h}{n}\Bigr).
\end{align*}

The condition $\max\{A/M, |C|\} \le \hat{c}$ of Lemma~\ref{sumlemma} is satisfied
with $\hat{c}=\frac1{15}$, due to $d^2h^2 =o(n)$ and the definition of~$M$.
We also have
\[ 
  A_1C_2, A_2C_1, A_2C_1^2 = O\Bigl(\frac{dh(d+h)}{n}\Bigr)
\]
and $(2e\hat{c})^M \le n^{\log(2e/15)} = o(n^{-1})$.
Therefore, by Lemma~\ref{sumlemma}, 
\[
\frac{T}{L(0)}=\exp\Bigl(\,\dfrac12 dh
  +O\Bigl(\frac{d^2 h^2}{n}\Bigr)\Bigr). \qedhere
\]
\end{proof}

\begin{thm}\label{thm:prob}
Let $D$ and $H$ be regular graphs on $n$ vertices, where $D$ is $d$-regular 
for $d\ge 1$ and $H$ is $h$-regular for $h\ge 1$.
Suppose $d^2h^2=o(n)$ as $n\to\infty$.  Then the probability that
a random relabelling of $H$ is edge-disjoint from $D$ is
\[
  \exp\Bigl(-\dfrac12 dh +O\Bigl(\frac{d^2 h^2}{n}\Bigr)\Bigr).
 \]
\end{thm}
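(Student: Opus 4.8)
The plan is to combine the counting infrastructure already built up in Sections~\ref{s:stepone} and~\ref{s:switching}. The key observation is that the probability we want to estimate is, by definition, $L(0)/n!$ — the number of relabellings of $H$ with no common edge with $D$, divided by the total number of relabellings. So it suffices to produce good estimates for $L(0)$ and to combine them appropriately.

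First I would write
\[
  \frac{L(0)}{n!} = \frac{L(0)}{T}\cdot\frac{T}{N},
\]
where $N$ is the total number of relabellings whose behaviour we can control, and express everything in terms of quantities already estimated. More precisely, Lemma~\ref{lem:basic} says that a random relabelling of $H$ lies in $\calL(0)\cup\calL(1)\cup\cdots\cup\calL(M-1)$ — i.e. has no common path of length~$2$ and fewer than $M$ common edges — with probability $1-O(d^2h^2/n)$; that is exactly $T/n! = 1 - O(d^2h^2/n)$. Next, Lemma~\ref{lem:tsum} gives $T/L(0) = \exp\bigl(\tfrac12 dh + O(d^2h^2/n)\bigr)$, hence $L(0)/T = \exp\bigl(-\tfrac12 dh + O(d^2h^2/n)\bigr)$. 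Multiplying,
\[
  \frac{L(0)}{n!} = \frac{L(0)}{T}\cdot\frac{T}{n!}
   = \exp\Bigl(-\tfrac12 dh + O\Bigl(\tfrac{d^2h^2}{n}\Bigr)\Bigr)
     \Bigl(1 + O\Bigl(\tfrac{d^2h^2}{n}\Bigr)\Bigr).
\]
Finally I would absorb the factor $1 + O(d^2h^2/n)$ into the exponential: since $1+O(x) = \exp(O(x))$ for $x\to 0$, and $d^2h^2/n = o(1)$ by hypothesis, this is $\exp(O(d^2h^2/n))$, and the two error terms combine to give $\exp\bigl(-\tfrac12 dh + O(d^2h^2/n)\bigr)$, as claimed.

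There is essentially no obstacle here beyond bookkeeping; the real work was done in the preceding lemmas. The one point that warrants a sentence of care is the identification of the probability with $L(0)/n!$: a uniformly random relabelling is a uniformly random bijection from $V(H)$ to $V(D)$ (equivalently a random permutation once we fix an identification of the two vertex sets), and the relabelled copy of $H$ is edge-disjoint from $D$ precisely when it contributes $0$ common edges, i.e. lies in $\calL(0)$; there are $n!$ relabellings in total and $L(0)$ favourable ones. The definition of $\calL(t)$ additionally requires no common path of length~$2$, but that is automatic when there are no common edges at all, so $\calL(0)$ really is the set of edge-disjoint relabellings. With that noted, the chain of equalities above finishes the proof.
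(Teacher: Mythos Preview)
Your proof is correct and follows essentially the same route as the paper: write the desired probability as $(L(0)/T)\cdot(T/n!)$, invoke Lemma~\ref{lem:basic} for $T/n!=1-O(d^2h^2/n)$ and Lemma~\ref{lem:tsum} for $L(0)/T$, then multiply. Your explicit remark that $\calL(0)$ coincides with the set of edge-disjoint relabellings (because zero common edges trivially implies zero common paths of length~$2$) is a point the paper leaves implicit, and is worth keeping.
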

\begin{proof}
The probability that there are no common paths of length two and less
than $M$ common edges is $1-O(d^2h^2/n)$ by Lemma~\ref{lem:basic}.
Subject to those conditions, the probability of no common edge is
\[
  \exp\Bigl(-\dfrac12 dh+O\Bigl(\frac{d^2 h^2}{n}\Bigr)\Bigr),
\]
by Lemma~\ref{lem:tsum}.  Multiplying these probabilities together
gives the theorem.
\end{proof}

Since the formula in Theorem~\ref{thm:prob} does not depend on the
structure of $D$ or $H$, the same formula holds if one or both $H$ and~$D$
are random regular graphs with the given degrees.

\begin{cor}\label{cor:disj}
Let $D_1,D_2,\ldots,D_k$ be regular graphs on $n$ vertices,
with degrees $d_1,\ldots,d_k$, respectively.
Assume $d_i \ge 1$,   $i=1, \ldots, k$ and $\sum_{1 \le i \le j < t\le k} d_i d_j d_t^2=o(n)$ as $n\to\infty$.
Then the probability that $D_1,D_2,\ldots,D_k$ are edge-disjoint after
random relabelling is
\[
   \exp\Bigl(-\dfrac12 \sum_{1 \le i < j \le k} d_id_j
  +O\Bigl(\frac{\sum_{1 \le i \le j < t\le k} d_i d_j d_t^2}{n}\Bigr)\Bigr).
\]
\end{cor}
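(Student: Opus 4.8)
The plan is to place the graphs one at a time and apply Theorem~\ref{thm:prob} at each step, with the union of the graphs already placed playing the role of $D$. By considering relative labellings we may assume $D_1$ is fixed and choose independent uniform random permutations $\pi_2,\dots,\pi_k$, applying $\pi_i$ to $D_i$. For $1\le t\le k$ let $E_t$ be the event that $D_1,\pi_2(D_2),\dots,\pi_t(D_t)$ are pairwise edge-disjoint (so $E_1$ always holds and $E_k$ is the event of interest), and put $U_{t-1}=D_1\cup\pi_2(D_2)\cup\cdots\cup\pi_{t-1}(D_{t-1})$ and $\delta_{t-1}=d_1+\cdots+d_{t-1}$. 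The chain rule gives
\[
  \Pr[E_k]=\prod_{t=2}^{k}\Pr\bigl[\pi_t(D_t)\text{ edge-disjoint from }U_{t-1}\bigm|E_{t-1}\bigr].
\]

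The key point is that, conditioned on $E_{t-1}$, the graph $U_{t-1}$ is $\delta_{t-1}$-regular, and $\pi_t$ is still uniformly distributed and independent of $U_{t-1}$. So the $t$-th factor is an average, over the conditional law of $U_{t-1}$, of $\Pr[\pi_t(D_t)\text{ edge-disjoint from }U]$ for $\delta_{t-1}$-regular graphs $U$; by Theorem~\ref{thm:prob} and the remark following it, this probability depends on neither $U$ nor $D_t$, so the factor equals $\exp\bigl(-\tfrac12\delta_{t-1}d_t+O(\delta_{t-1}^{2}d_t^{2}/n)\bigr)$, provided $\delta_{t-1}^{2}d_t^{2}=o(n)$. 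To check the latter at each step, note $\delta_{t-1}^{2}=\sum_{i<t}d_i^{2}+2\sum_{i<j<t}d_id_j\le 2\sum_{i\le j<t}d_id_j$, whence $\delta_{t-1}^{2}d_t^{2}\le 2\sum_{1\le i\le j<t\le k}d_id_jd_t^{2}=o(n)$ by hypothesis.

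Finally, one multiplies the factors and collects terms. Since the implied constant in Theorem~\ref{thm:prob} is absolute, summing over $t$ is harmless: the main term is $-\tfrac12\sum_{t=2}^{k}\delta_{t-1}d_t=-\tfrac12\sum_{t=2}^{k}\sum_{i<t}d_id_t=-\tfrac12\sum_{1\le i<j\le k}d_id_j$, and, using the same inequality $\delta_{t-1}^{2}\le 2\sum_{i\le j<t}d_id_j$ again, the accumulated error is $O\bigl(\tfrac1n\sum_{t=2}^{k}\delta_{t-1}^{2}d_t^{2}\bigr)=O\bigl(\tfrac1n\sum_{1\le i\le j<t\le k}d_id_jd_t^{2}\bigr)$, which is the stated formula. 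The main obstacle is the conditioning step: one must verify carefully that conditioning on $E_{t-1}$ leaves $\pi_t$ uniform and independent of $U_{t-1}$, and then invoke the structure-independence in Theorem~\ref{thm:prob} precisely so as never to have to describe the (complicated) conditional distribution of $U_{t-1}$ itself; everything else is bookkeeping on the two nested sums.
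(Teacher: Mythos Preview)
Your proposal is correct and follows essentially the same approach as the paper: place the graphs one at a time, merge the already-placed graphs into a single regular graph of degree $\delta_{t-1}=d_1+\cdots+d_{t-1}$, and apply Theorem~\ref{thm:prob} at each step. Your write-up is in fact more detailed than the paper's, explicitly handling the conditioning, verifying the hypothesis $\delta_{t-1}^2 d_t^2=o(n)$ at each step, and carrying out the bookkeeping that turns $\sum_t \delta_{t-1}^2 d_t^2$ into the stated triple sum via $\delta_{t-1}^2\le 2\sum_{i\le j<t}d_id_j$.
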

\begin{proof}
First compute the probability that a random relabeling of $D_2$ has no common edges with $D_1$. 
Then, let $D_1+D_2$ be the $d_1+d_2$-regular graph obtained from merging $D_1$ and $D_2$ and compute the probability that a random relabeling of $D_3$ has no common edges with $D_1+D_2$.
We  continue doing these computations inductively. The corollary obtained by applying Theorem~\ref{thm:prob}  repeatedly.
It can be shown that this form of the error term is the best that follows from 
Theorem~\ref{thm:prob} and that it is minimized when $d_1\ge \cdots \ge d_k$.
\end{proof}

Now we can prove our first new case of Conjecture~\ref{conj:main}.

\begin{thm}\label{thm:coloured}
  Let $d_0,d_1,\ldots,d_k\ge 1$ be integers such that
  $\sum_{j=0}^n d_j=n-1$.  Define $d=\sum_{i=1}^k d_i$ and
  suppose that $\sum_{1 \le i \le j < t\le k} d_i d_j d_t^2=o(n)$
  and $d^3=o(n)$ as $n\to\infty$.
 Define $\lambda_j=\frac{d_j}{n-1}$ for each $j$. Then
\[
    R(n;d_0,\ldots,d_k) =
    R'(n;d_0,\ldots,d_k) \Bigl(1 + O\Bigl( \frac {d^3}{n} 
      + \frac{1}{n}\sum_{1 \le i \le j < t\le k} d_i d_j d_t^2 \Bigr)\Bigr),
\]
where
 $R'(n;d_0,\ldots,d_k)$ is defined in Conjecture~\ref{conj:main}.
\end{thm}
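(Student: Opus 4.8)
My plan is to count the coloured factorisations as ordered tuples and reduce everything to Corollary~\ref{cor:disj}. Because $\sum_{j=0}^k d_j=n-1$, a factorisation of $K_n$ into coloured spanning regular factors of degrees $d_0,d_1,\ldots,d_k$ is exactly the data of an ordered $k$-tuple $(D_1,\ldots,D_k)$ of pairwise edge-disjoint spanning regular graphs on the vertex set with degrees $d_1,\ldots,d_k$: the remaining factor $D_0=K_n\setminus(D_1\cup\cdots\cup D_k)$ is then forced to be $d_0$-regular, and conversely every such tuple arises this way. Hence, choosing $D_1,\ldots,D_k$ independently and uniformly at random among the regular graphs of the respective degrees on $n$ vertices,
\[
  R(n;d_0,\ldots,d_k)=\Bigl(\prod_{i=1}^{k}R_{d_i}(n)\Bigr)\,P,
\]
where $P$ is the probability that the chosen $D_1,\ldots,D_k$ are pairwise edge-disjoint. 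Note that $d_i\le d=o(n^{1/3})$ for each $i\ge1$, so every $R_{d_i}(n)$ lies comfortably in the sparse regime.

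To evaluate $P$, I would first note that for independent \emph{uniform} $D_i$ the tuple $(D_1,\ldots,D_k)$ has the same distribution as $(\pi_1 D_1,\ldots,\pi_k D_k)$ for independent uniform random permutations $\pi_i$ (applying a random permutation to a uniform random regular graph leaves it uniform, by orbit--stabiliser). Thus $P$ is precisely the ``edge-disjoint after random relabelling'' probability treated in Corollary~\ref{cor:disj}, averaged over the $D_i$; and since that estimate does not depend on the structure of the graphs (the remark after Theorem~\ref{thm:prob}) and its hypothesis $\sum_{1\le i\le j<t\le k}d_id_jd_t^2=o(n)$ is assumed,
\[
  P=\exp\Bigl(-\dfrac12\sum_{1\le i<j\le k}d_id_j+O\Bigl(\dfrac1n\sum_{1\le i\le j<t\le k}d_id_jd_t^2\Bigr)\Bigr).
\]

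It then remains to identify $\bigl(\prod_{i=1}^{k}R_{d_i}(n)\bigr)P$ with $R'(n;d_0,\ldots,d_k)$ up to the claimed relative error. Substituting Theorem~\ref{thm:reg} (in its sparse form, whose per-factor relative error sums over $i$ to $O(d^3/n)$) for each $R_{d_i}(n)$, the $k$ constant factors multiply together to give the constant $e^{k/4}2^{k/2}$ of $R'$ (this is where the exponent $k/4$ comes from), and the remaining task is two elementary estimates, both valid because $d^3=o(n)$: with $\lambda_i=d_i/(n-1)$ and $\lambda_0=d_0/(n-1)=1-\sum_{i\ge1}\lambda_i$,
\[
  \Bigl(\prod_{i=1}^{k}(1-\lambda_i)^{1-\lambda_i}\Bigr)^{\binom n2}
   =\lambda_0^{\lambda_0\binom n2}\exp\Bigl(-\dfrac12\sum_{1\le i<j\le k}d_id_j+O\bigl(d^3/n\bigr)\Bigr)
\]
(expand $\log(1-\lambda_i)$ and $\log\lambda_0$ to second order), and
\[
  \Bigl(\prod_{i=1}^{k}\binom{n-1}{d_i}\Bigr)^{\!n}
   =\binom{n-1}{d_0,\ldots,d_k}^{\!n}\exp\Bigl(\sum_{1\le i<j\le k}d_id_j+O\bigl(d^3/n\bigr)\Bigr)
\]
(cancel the common factorials and then expand $\log(n-1-s)=\log(n-1)-s/(n-1)+O(s^2/n^2)$). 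Multiplying these two estimates with $P$, the $+\sum_{i<j}d_id_j$ cancels the two $-\dfrac12\sum_{i<j}d_id_j$ contributions exactly, leaving $R(n;d_0,\ldots,d_k)=R'(n;d_0,\ldots,d_k)\bigl(1+O(d^3/n+\tfrac1n\sum_{1\le i\le j<t\le k}d_id_jd_t^2)\bigr)$.

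The genuinely fiddly part is that last paragraph: one must keep track of every lower-order contribution — the error term of Theorem~\ref{thm:reg}, the Taylor remainders, the $n/(n-1)$-type corrections, and the error from Corollary~\ref{cor:disj} — and verify that each is absorbed into $O\bigl(d^3/n+\tfrac1n\sum_{1\le i\le j<t\le k}d_id_jd_t^2\bigr)$. The two hypotheses are precisely what is needed: $d^3=o(n)$ is what makes the $\binom n2$-fold products converge to the stated closed forms, while $\sum_{i\le j<t}d_id_jd_t^2=o(n)$ is what controls the edge-disjointness probability. The passage from fixed to random regular graphs in the second paragraph is routine but deserves to be spelled out, since Corollary~\ref{cor:disj} is phrased for fixed graphs.
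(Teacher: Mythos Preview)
Your proposal is correct and follows essentially the same approach as the paper: write $R(n;d_0,\ldots,d_k)=P\prod_{i=1}^k R_{d_i}(n)$, evaluate $P$ via Corollary~\ref{cor:disj}, insert the sparse regular-graph asymptotic~\eqref{eq:reg} for each factor, and then perform the two elementary Taylor estimates comparing $\prod_i(1-\lambda_i)^{1-\lambda_i}$ with $\lambda_0^{\lambda_0}$ and $\prod_i\binom{n-1}{d_i}$ with the multinomial, so that the three $\sum_{i<j}d_id_j$ contributions cancel. Your justification for passing from fixed to random graphs via the relabelling symmetry is a bit more explicit than the paper's, which simply invokes the remark after Theorem~\ref{thm:prob}, but the content is the same.
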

\begin{proof}
  We can construct all such factorisations by choosing edge-disjoint regular
  graphs $D_1,\ldots,D_k$ of degrees $d_1,\ldots,d_k$.

 In the case of $k=1$, we are counting regular graphs, and it was
 shown in~\cite{MW2} that
 \begin{equation}\label{eq:reg}
      R_{d_i}(n) = R'(n; n{-}1{-}d_i,d_i)\bigl( 1+ O(d_i^2/n) \bigr),
  \end{equation}
  provided the error term is $o(1)$.
  Now we can write
  \[
    R(n;d_0,\ldots,d_k) = P(n;d_1,\ldots,d_k)\, \prod_{i=1}^k \,R_{d_i}(n),
  \]
  where $P(n;d_1,\ldots,d_k)$ is the expression in Corollary~\ref{cor:disj}.
  Using $(1-x)\log (1-x)=-x+\frac12 x^2 + O(x^3)$ we have
  \begin{align*}
    \log\frac{\prod_{i=1}^k \bigl( \lambda_i^{\lambda_i}(1-\lambda_i)^{1-\lambda_i}\bigr)}
                 {\bigl(1-\sum_{i=1}^k \lambda_i\bigr)^{1-\sum_{i=1}^k \lambda_i}
                   \prod_{i=1}^k  \lambda_i^{\lambda_i}}
        &= -\sum_{1\le i<j\le k} \lambda_i\lambda_j + O(d^3/n^3) \\
        &= -\frac{1}{n(n-1)} \sum_{1\le i<j\le k} d_id_j + O(d^3/n^3).
 \end{align*}
 For $1\le x=o(n^{1/2})$ we have
 $\sum_{i=0}^{x-1}\log(1-i/(n-1)) = -x(x-1)/(2n) + O(x^3/n^2)$, so
\[
  \log\frac{ \binom{n-1}{d_0,\ldots,d_k}} {\prod_{i=1}^k \binom{n-1}{d_i}}
  = \frac{1}{n} \sum_{1\le i<j\le k} d_id_j + O(d^3/n^2).
\]
Putting the parts together with~\eqref{eq:reg} completes the proof.
\end{proof}


Conjecture~\ref{conj:main} proposes that $R'(n;d_0,d_1,\ldots,d_n)$
matches $R'(n;d_0,d_1,\ldots,d_n)$  over a much wider domain than we can proved.
One case we can test using earlier work concerns partial
1-factorisations of~$K_n$.
Let $F(n,k)$ be the number of \textit{sequences} of $k$ disjoint perfect
matchings in~$K_n$. Note that $F(n,n-2)=F(n,n-1)$; we will use only
$F(n,n-2)$.
In our notation, $F(n,k)=R(n;n{-}k{-}1,1,\ldots,1)$, where there are $k$
explicit ones.
In~\cite{McL}, McLeod found the asymptotic value of $F(n,k)$ for $k=o(n^{5/6})$,
namely,
\[
  F(n,k)
  \sim \Bigl( \frac{n!}{2^{n/2}(n/2)!}\Bigr)^{\!k}
     \Bigl( \frac{n!}{n^k(n-k)!}\Bigr)^{\!n/2}
     \Bigl(1 - \frac{k}{n}\Bigr)^{\!n/4} e^{k/4}.
\]
The reader can check that this expression is equal to
$R'(n;n{-}1{-}k,1,\ldots,1)$ asymptotically when $k=o(n^{6/7})$.

\begin{figure}[ht!]
   \[  \includegraphics[scale=0.44]{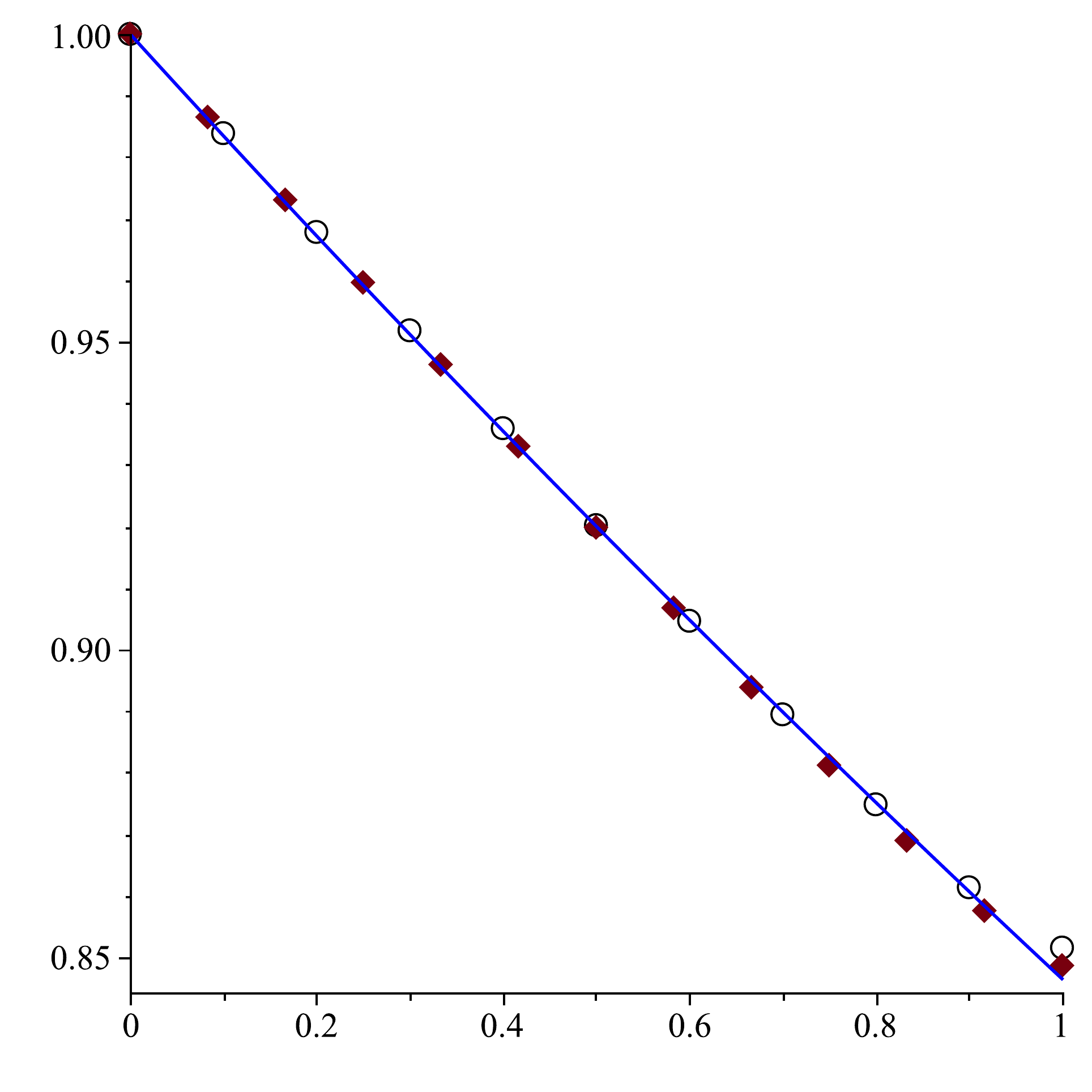} \]
   \vspace*{-6ex}
   \caption{Partial 1-factorisations.
   $F(n,k)/R'(n;n{-}1{-}k,1,\ldots,1)$ for $n=12$ (circles)
   and $n=14$ (diamonds). The horizontal scale is $x=k/(n-2)$
   and the curve is $e^{-x/6}$.\label{fig}}
\end{figure}

To illustrate what happens when $k$ is even larger, Figure~\ref{fig}
shows the ratio
\[
  F(n,k)/R'(n;n{-}1{-}k,1,\ldots,1)
\]
for the largest
sizes for which $F(n,k)$ is known exactly~\cite{DGM, KO}.                                                                                                                                                                                                                                                                                                                                                                                                                                                                                                                                                                                                                                                                                                                                                                                                                                                                                                                                                                                                                                                                                                                                                                                                                                                                                                                                                                                                                                                                                                                                                                                                                                                                                                                                                                                                                                                                                                                                                                                                                                                                                                                                                                                                                                                                                                            
Experiment suggests that $F(n,x(n-2))/R'(n;n{-}1{-}x(n-2),1,\ldots,1)$
converges to a continuous function $f(x)$ as $n\to\infty$ with
$x$ fixed. Conjecture~\ref{conj:main} in this cases corresponds
to $f(0)=1$.  At the other end, $x=1$ corresponding to complete
1-factorisations, we can also check the 3-digit estimates of
$F(n,n-2)$ in~\cite{DGM} for $n=16,18$---they give ratios 0.844
and 0.845, respectively.

Another case that we can solve is when there are two components
of high degree and some number of low degree.
In~\cite{ranx}, the second author considered the case when $k=2$,
$\min\{d_1,n-1-d_1\}\ge cn/\log n$ for some $c>\frac23$, 
and $d_2=o(n^{\eps})$ for some sufficiently small
$\eps>0$. In this case, the probability that a
random $d_1$-regular graph and an arbitrary $d_2$-regular
graph are edge-disjoint is asymptotic to
\begin{equation}\label{eq:join}
   (1-\lambda_1)^{d_2n/2}
     \exp\Bigl( -\frac{\lambda_1 d_2(d_2-2)}{4(1-\lambda_1)}\Bigr).
\end{equation}
Note that this is not uniform over $d_1$-regular graphs, but an
average over them.  However, within the error term it is uniform
over $d_2$-regular graphs and that is enough.

\begin{thm}\label{thm:ranx}
Let $d_1,d_2,\ldots,d_k\ge 1$ be such that
$\min\{d_1,n-1-d_1\}\ge cn/\log n$ for some $c>\frac23$
and $d_2+\cdots+d_k=O(n^\eps)$ for sufficiently small
$\eps>0$.  Let $d_0=n-1-\sum_{i=1}^k d_i$.
Then, as $n\to\infty$,
\[
  R(n; d_0,\ldots,d_k) \sim R'(n; d_0,\ldots,d_k).
\]
\end{thm}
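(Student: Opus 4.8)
\textbf{Proof proposal for Theorem~\ref{thm:ranx}.}

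The plan is to peel off the $k-1$ small factors $D_2,\dots,D_k$ one at a time, using~\eqref{eq:join} for the first detachment against the big factor and Theorem~\ref{thm:prob} (or rather Corollary~\ref{cor:disj}) for the interactions among the small factors, and finally to fold the leftover degree back into $d_0$. Concretely, write $D=n-1-d_1=d_0+d_2+\cdots+d_k$ and first condition on a random $d_1$-regular graph $D_1$. The remaining edges of $K_n$ form a $D$-regular graph, into which we must pack edge-disjoint regular subgraphs of degrees $d_2,\dots,d_k$ (the last piece, degree $d_0$, is whatever remains). Thus
\[
  R(n;d_0,\dots,d_k) = R_{d_1}(n)\cdot \mathbb{P}\bigl[\text{$D_2,\dots,D_k$ edge-disjoint from $D_1$ and from each other}\bigr]\cdot\prod_{i=2}^k R_{d_i}(n),
\]
where the probability is over independent random relabellings and the $D_i$ themselves may be taken random regular of the right degree (the formulas in Section~\ref{s:stepone} do not depend on structure). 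The probability splits, up to the stated error, as a product: first a random relabelling of $D_2$ against $D_1$, contributing~\eqref{eq:join} with $d_2$ replaced by the running sum is \emph{not} quite right---more carefully, I would merge $D_2,\dots,D_k$ into a single graph $H$ of degree $h:=d_2+\cdots+d_k=O(n^\eps)$, count relabellings of $H$ edge-disjoint from $D_1$ via~\eqref{eq:join} (with $d_2$ replaced by $h$), and separately account by Corollary~\ref{cor:disj} for the $\binom{h}{\text{...}}$-type correction coming from the fact that a random $h$-regular graph is not a uniformly random superposition of the $D_i$. Since $h=O(n^\eps)$ we have $h^3=o(n)$, so Corollary~\ref{cor:disj} and Theorem~\ref{thm:coloured} apply to the small cluster, giving the $e^{(k-1)/4}2^{(k-1)/2}$ and multinomial factors associated with indices $2,\dots,k$ and with $d_0$.

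The remaining work is bookkeeping: combine~\eqref{eq:join} (which supplies the $(1-\lambda_1)^{hn/2}$ and the $\exp$-correction involving $\lambda_1$, $h$) with Theorem~\ref{thm:reg} applied to $D_1$, and with the Theorem~\ref{thm:coloured} output for the cluster $\{d_0,d_2,\dots,d_k\}$ viewed as a factorisation of the complementary $D$-regular ``graph''. The key algebraic identity to verify is that
\[
  R_{d_1}(n)\cdot (1-\lambda_1)^{hn/2}\exp\!\Bigl(-\tfrac{\lambda_1 h(h-2)}{4(1-\lambda_1)}\Bigr)\cdot R(\text{complementary factorisation}) \;\sim\; R'(n;d_0,\dots,d_k),
\]
and this should reduce, after taking logarithms and using $\lambda_0+\cdots+\lambda_k=1$, to the same kind of manipulation already performed in the proof of Theorem~\ref{thm:coloured}: the $\lambda_i^{\lambda_i}$ products telescope, the binomial/multinomial conversions use $\sum_{i<j}\log(1-i/(n-1))$ expansions, and the exponential constants $e^{k/4}2^{k/2}$ assemble from $e^{1/2}2^{1/2}$ (from $R_{d_1}$), the $(k-1)/4$, $(k-1)/2$ from the small cluster, and the $\exp$-term in~\eqref{eq:join}. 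One must check the $\exp$-term in~\eqref{eq:join} contributes exactly the cross-term $-\tfrac12\sum_{j\ge 2}\lambda_1 d_j$ worth of adjustment that the naive ``$\prod\lambda_i^{\lambda_i}$'' overcounts; since $h=O(n^\eps)$ all the quadratic-in-$h$ pieces are $o(1)$ in the exponent and vanish.

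I expect the main obstacle to be the first step---correctly matching the \emph{average} formula~\eqref{eq:join} to the enumeration. Because~\eqref{eq:join} holds only on average over $d_1$-regular graphs, not uniformly, one cannot simply iterate Theorem~\ref{thm:prob}; instead one must verify that the count $R(n;d_0,\dots,d_k)$ genuinely factors as $R_{d_1}(n)$ times an expectation over random $D_1$ of the number of ways to pack the small factors into the complement, and that this expectation is, to the required precision, $\bigl(\text{value of~\eqref{eq:join} with }d_2\rightsquigarrow h\bigr)$ times the cluster count from Theorem~\ref{thm:coloured}. The point that makes this go through is the sentence already noted in the excerpt: within the error term,~\eqref{eq:join} is uniform over the $d_2$-regular graph (here the $h$-regular $H$), so no second-moment control over $H$ is needed, only over $D_1$, which is exactly what~\cite{ranx} provides. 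Everything else is the routine logarithmic bookkeeping sketched above, essentially identical in flavour to the proof of Theorem~\ref{thm:coloured}.
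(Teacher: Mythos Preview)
Your approach is correct and, once the self-corrections settle, coincides with the paper's proof: merge the small factors into a single $\hat d$-regular graph $H$ with $\hat d=d_2+\cdots+d_k$, count partitioned $\hat d$-regular graphs via Theorem~\ref{thm:coloured} (your $\prod_{i\ge 2}R_{d_i}(n)$ times the Corollary~\ref{cor:disj} factor is exactly $R(n;n{-}1{-}\hat d,d_2,\ldots,d_k)$), count $d_1$-regular graphs via~\eqref{eq:reg}, and multiply by~\eqref{eq:join} with $\hat d$ in place of $d_2$ for the disjointness probability. The paper simply asserts that this triple product is asymptotic to $R'(n;d_0,\ldots,d_k)$ without writing out the logarithmic bookkeeping you sketch, and it phrases the small-cluster step directly as ``the number of $\hat d$-regular graphs partitioned into $d_2,\ldots,d_k$-regular graphs'' rather than your probability-times-product decomposition, but these are the same computation.
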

\begin{proof}
 Let $\hat d=\sum_{i=2}^k d_i$.
 By Theorem~\ref{thm:coloured}, the number of
 $\hat d$-regular graphs partitioned into $d_2,\ldots,d_k$-regular
 graphs is asymptotic to $R'(n;n{-}1{-}\hat d,d_2,\ldots,d_k)$.
 By~\eqref{eq:reg}, the number of $d_1$-regular graphs is
 asymptotic to $R'(n;n{-}1{-}d_1,d_1)$. The probability of these
 two graphs being edge disjoint when the $d_1$-regular graph
 is chosen randomly is given by~\eqref{eq:join} (with $\hat d$
 in place of $d_2$).
 The product of these three quantities is asymptotic to
 $R'(n;n{-}1{-}d_1{-}\hat d,d_1,\ldots,d_k)$.
\end{proof}

\section{Concluding remarks}\label{s:conclusion}

We have proposed an asymptotic formula for the number of ways
to partition a complete graph into spanning regular subgraphs
and proved it in several cases.
The analytic method described in~\cite{mother} will be sufficient
to test the conjecture when there are many factors of high degree.
That will be the topic of a future paper.

\nicebreak

\end{document}